\documentclass[10pt,twocolumn,twoside,letterpaper]{IEEEtran}

\usepackage{epsfig}
\usepackage{epic, eepic}
\usepackage{pstricks}
\usepackage{graphics}
\usepackage{graphicx}
\usepackage[scanall]{psfrag}
\usepackage{subfigure}
\usepackage{here}
\usepackage{enumerate}
\usepackage{latexsym}
\usepackage{amsmath}
\usepackage{amssymb}
\usepackage{amsfonts}
\usepackage{mathrsfs}
\usepackage{euscript}
\usepackage{pifont}
\usepackage[LY1]{fontenc}                    
\usepackage{cite}


\newcommand{\ABCD}[4]{\ensuremath{\left[
                        \begin{array}{c|c}
                          #1 & #2 \\ \hline
                          #3 & #4
                        \end{array}\right]}}
\newcommand{\ABCDs}[4]{\ensuremath{\left[
                        \begin{array}{c|c}
                          #1 & #2 \\[.2ex] \hline \\[-2.25ex]
                          #3 & #4
                        \end{array}\right]}}


\newcommand{\Real}{\ensuremath{{\mathbb{R}}}}

\newcommand{\Complex}{\ensuremath{{\mathbb{C}}}}

\newcommand{\curly}[1]{\ensuremath{{\mathscr{#1}}}}

\newcommand{\Hinf}{\ensuremath{{\mathscr{H}}_{\infty}}}

\newcommand{\RHinf}{\ensuremath{{\mathscr{R}}\!{\mathscr{H}}_{\infty}}}



\newcommand{\diag}{\ensuremath{\operatornamewithlimits{diag}}}




\newcommand{\Meig}{\ensuremath{\overline{\lambda}}}


\newcommand{\adj}[1]{\ensuremath{#1^{\thicksim}}}
\newcommand{\jw}{\ensuremath{j\omega}}

\newcommand{\onehalf}{\ensuremath{\frac{1}{2}}}


\newcommand{\matrixB}[1]{\begin{bmatrix}#1\end{bmatrix}}
\newcommand{\matrixP}[1]{\begin{pmatrix}#1\end{pmatrix}}

\newcommand{\nn}{\ensuremath{\notag \\}}

\newcommand{\ds}[1]{\\[#1\baselineskip]}
\newcommand{\vsp}[1]{\vspace{#1\baselineskip}}



\newcommand{\eg}{\text{e.g.}}
\newcommand{\ie}{\text{i.e.}}



\newtheorem{theorem}{Theorem}
\newtheorem{corollary}[theorem]{Corollary}
\newtheorem{lemma}[theorem]{Lemma}





\renewcommand{\iff}{\ensuremath{\Leftrightarrow}}
\renewcommand{\implies}{\ensuremath{\Rightarrow}}
\renewcommand{\impliedby}{\ensuremath{\Leftarrow}}


\flushbottom

\begin{document}

\title{Stability robustness of a feedback interconnection of systems with negative imaginary frequency response
       \thanks{
               Corresponding author is Alexander Lanzon.
               Tel:~+44-161-306-8722, Fax:~+44-161-306-8729,
               Email:~\texttt{a.lanzon}@\texttt{ieee.org}. \vsp{.3}
               }}

\author{Alexander Lanzon\thanks{\textsl{Alexander Lanzon} is at the
                          Control Systems Centre,
                          School of Electrical and Electronic Engineering,
                          University of Manchester, 
                          Manchester M60 1QD, 
                          UK.\vsp{.3}} \hspace{3em}
        Ian R.~Petersen\thanks{\textsl{Ian R.~Petersen} is with the 
                          School of Information Technology and Electrical Engineering, 
                          University of New South Wales at the Australian Defence Force Academy, 
                          Canberra, ACT 2600 Australia.  
                          Email:~\texttt{i.r.petersen}@\texttt{gmail.com}} \vspace{-\baselineskip}}

\markboth{Camera ready manuscript}
         {Lanzon and Petersen - Stability robustness of a feedback interconnection of \dots}

\maketitle

\begin{abstract}
  A necessary and sufficient condition, expressed simply as the DC loop gain (\ie~the loop gain at
  zero frequency) being less than unity, is given in this paper to guarantee the internal stability
  of a feedback interconnection of Linear Time-Invariant (LTI) Multiple-Input Multiple-Output (MIMO)
  systems with negative imaginary frequency response. Systems with negative imaginary frequency
  response arise for example when considering transfer functions from force actuators to co-located
  position sensors, and are commonly important in for example lightly damped structures. The key
  result presented here has similar application to the small-gain theorem, which refers to the
  stability of feedback interconnections of contractive gain systems, and the passivity theorem (or
  more precisely the positive real theorem in the LTI case), which refers to the stability of
  feedback interconnections of positive real systems.  A complete state-space characterisation of
  systems with negative imaginary frequency response is also given in this paper and also an example
  that demonstrates the application of the key result is provided.
\end{abstract}

\begin{keywords}
 positive position feedback, positive-real systems, bounded-real systems, small gain theorem,
 passivity.
\end{keywords}

\subsection*{\centering Notation}

Let $\RHinf^{n\times n}$ denote the set of real-rational stable transfer function matrices of
dimension $(n\times n)$. Let $\Real$ and $\Complex$ denote fields of real and complex numbers
respectively, and $\Real^{n\times n}$ and $\Complex^{n\times n}$ denote real and complex matrices
respectively of dimension $(n\times n)$. Let $\lambda_i(A)$ denote the $i$-th eigenvalue of a square
complex matrix $A$ and $\Meig(A)$ denote the maximum eigenvalue for a square complex matrix $A$ that
has only real eigenvalues. Let $\Re(a)$ and $\Im(a)$ denote the real and imaginary parts
respectively of $a\in\Complex$. Let $A^T$ and $A^*$ denote the transpose and the complex conjugate
transpose of a complex matrix $A$ and $\adj{M}(s)$ denote the adjoint of transfer function matrix
$M(s)$ given by $M(-s)^T$. Finally, let $\diag(a,b)$ be shorthand for $\matrixB{a & 0 \\ 0 & b}$ and
$A^{-*}$ be shorthand for $(A^{-1})^*$.

\section{\uppercase{Introduction}}
\label{introduction}

Consider a positive feedback interconnection of two LTI MIMO systems, $M(s)$ and $N(s)$, as
shown in Figure~\ref{fig:interconnection}, denoted by $[M(s),N(s)]$.
\begin{figure}[!htb]
\centering
\scalebox{1}{\setlength{\unitlength}{0.00083333in}
\begingroup\makeatletter\ifx\SetFigFont\undefined%
\gdef\SetFigFont#1#2#3#4#5{%
  \reset@font\fontsize{#1}{#2pt}%
  \fontfamily{#3}\fontseries{#4}\fontshape{#5}%
  \selectfont}%
\fi\endgroup%
{\renewcommand{\dashlinestretch}{30}
\begin{picture}(3474,1539)(0,-10)
\put(612,1287){\ellipse{150}{150}}
\put(2862,237){\ellipse{150}{150}}
\path(1512,1512)(2112,1512)(2112,1062)
	(1512,1062)(1512,1512)
\path(1512,462)(2112,462)(2112,12)
	(1512,12)(1512,462)
\path(687,1287)(1512,1287)
\blacken\path(1392.000,1257.000)(1512.000,1287.000)(1392.000,1317.000)(1392.000,1257.000)
\path(2112,1287)(2862,1287)(2862,312)
\blacken\path(2832.000,432.000)(2862.000,312.000)(2892.000,432.000)(2832.000,432.000)
\path(2787,237)(2112,237)
\blacken\path(2232.000,267.000)(2112.000,237.000)(2232.000,207.000)(2232.000,267.000)
\path(1512,237)(612,237)(612,1212)
\blacken\path(642.000,1092.000)(612.000,1212.000)(582.000,1092.000)(642.000,1092.000)
\path(12,1287)(537,1287)
\blacken\path(417.000,1257.000)(537.000,1287.000)(417.000,1317.000)(417.000,1257.000)
\path(3462,237)(2937,237)
\blacken\path(3057.000,267.000)(2937.000,237.000)(3057.000,207.000)(3057.000,267.000)
\put(1610,130){\makebox(0,0)[lb]{{\SetFigFont{12}{14.4}{\rmdefault}{\mddefault}{\updefault}$N(s)$}}}
\put(1610,1180){\makebox(0,0)[lb]{{\SetFigFont{12}{14.4}{\rmdefault}{\mddefault}{\updefault}$M(s)$}}}
\end{picture}
}}
\caption{\small\label{fig:interconnection} Positive feedback interconnection}
\end{figure}
The Nyquist stability theorem (see for example~\cite{Vinnicombe:01}) gives necessary and sufficient
conditions under which this interconnection is internally stable, using much information of $M(s)$
and $N(s)$. However, when $M(s)$ and $N(s)$ satisfy certain known properties (\eg~they are both
bounded-real with product of gains less than unity, or they are both positive-real, etc), it is also
possible to derive powerful theorems (such as the small-gain
theorem~\cite{Zhou-Doyle-Glover:96,Green-Limebeer:95}, or the passivity
theorem~\cite{Anderson-Vongpanitlerd:73,Khalil:96}, etc) that use only limited information on $M(s)$
and $N(s)$ to establish the internal stability of this feedback interconnection. This is powerful
and interesting because it provides a mechanism to derive robust stability results when systems are
perturbed by uncertain dynamics that are quantified only in terms of restricted information
(\eg~stable and contractive gain for the small gain theorem, or stable and positive real for the
passivity theorem, etc).

In this paper, we derive a new result of a similar flavor. We assume that both $M(s)$ and $N(s)$
are LTI MIMO stable systems with ``negative imaginary frequency response''\footnote{Broadly, we say
  that (see Section~\ref{sec:technical_results} for precise set definitions) a system $R(s)$ has
  ``negative imaginary frequency response'' when $j[R(\jw)-R(\jw)^*]\geq 0$ (or $>0$) for all
  $\omega\in(0,\infty)$. This is because for Single-Input Single-Output (SISO) systems
  $-\Im(R(\jw))=j[R(\jw)-R(\jw)^*]$. Note that at $\omega=0$ or $\omega=\infty$, $\Im(R(\jw))=0$ as
  $R(s)$ is real-rational.}  and use this information to derive a necessary and sufficient internal
stability condition using only limited information on $M(s)$ and $N(s)$. 



We now show why systems with negative imaginary frequency response are important in
engineering applications. We will do this via a simple example. Consider a lightly damped structure
with co-located position sensors and force actuators. Lightly damped structures with co-located
position sensors and force actuators can typically be modeled by a (possibly infinite) sum of
second order transfer functions as follows:
\[  P_{\Delta}(s) := \sum_{i=1}^H \frac{k_{i}\omega_{n,i}^2}{s^2  +2\zeta_i\omega_{n,i} s  + \omega^2_{n,i}}.  \] 
For the purpose of control systems design, however, one typically tends to include only a small
finite number of modes $(h\ll H)$ in the modeling of such systems, thereby giving rise to spillover
unmodeled dynamics (\ie~unmodeled dynamics due to the lightly-damped modes not included in the
plant model).  That is, let $P(s)$ be the truncated plant model used for control systems design and
$\Delta(s)$ be the spillover dynamics, both given by:
\begin{align*}  
 P(s) &:= \sum_{i=1}^h \frac{k_{i}\omega_{n,i}^2}{s^2  +2\zeta_i\omega_{n,i} s  + \omega^2_{n,i}}
    \\ \text{ and }
\Delta(s) &:=\!\!\sum_{i=(h+1)}^H \frac{k_{i}\omega_{n,i}^2}{s^2  +2\zeta_i\omega_{n,i} s  + \omega^2_{n,i}}.
\end{align*}
It is typically an important, though difficult, design specification to ensure that
the closed-loop system retains stability in the presence of such spillover dynamics $\Delta(s)$.
Since the relative degree of such spillover dynamics is more than unity, standard positive real
analysis~\cite{Anderson-Vongpanitlerd:73,Khalil:96} will not be very helpful in establishing robust
stability and since these systems tend to be highly resonant, application of the small gain
theorem~\cite{Zhou-Doyle-Glover:96,Green-Limebeer:95} would typically be very 
conservative. However, it is readily noticed that such spillover dynamics $\Delta(s)$
are stable and satisfy a negative imaginary frequency response property. The DC gain of the
spillover dynamics is simply $\sum_{i=(h+1)}^H k_i$. Also, satisfaction of stability and the
negative imaginary frequency response property is invariant to values of $\zeta_i>0$ and $\omega_{n,i}>0$ for all
$i\in[h+1,H]$. Consequently, provided a controller $C(s)$ is designed so as to make the closed-loop
transfer function $\frac{C}{1+PC}$ from plant output disturbances to plant input satisfy a negative
imaginary frequency response property with DC gain strictly less than $1/(\sum_{i=(h+1)}^H k_i)$,
then robust stability to all spillover dynamics $\Delta(s)$, and more, will hold for any value of
$\zeta_i>0$ and $\omega_{n,i}>0$ for all $i\in[h+1,H]$, and hence $C(s)$ will also robustly
stabilize $P_{\Delta}(s)$.

Note that a similar condition  \emph{specifically for a subclass of SISO systems} has existed in the
 Positive Position Feedback control literature~\cite{GC85,FC90} for some time. It is also not
difficult to see how such a condition arises in SISO systems via a Nyquist diagram sketch. In fact,
most controller synthesis and analysis in Positive Position Feedback
control  is based on  
graphical techniques using Nyquist plots, or non-convex parameter optimization~\cite{MVB06}. In
this paper, we do three things: (a) we formalize the robustness qualities of feedback
interconnections of systems with negative imaginary frequency response via a mathematical theorem
and corollary (as opposed to graphical Nyquist sketches); (b) we extend the ideas to MIMO LTI
systems and show that even in that case, a necessary and sufficient condition for the internal stability of
such systems is that the DC loop-gain (measured in a precise sense) is less than unity; and (c) we
give a complete state-space characterization of MIMO LTI systems with negative imaginary frequency
response which may, in future work, underpin controller synthesis.

It is worth also pointing out that while a transfer function from force actuators to co-located
position sensors has typically negative imaginary frequency response, the corresponding transfer
function from force actuators to co-located velocity sensors has typically a positive real response.
Consequently, it is legitimate to wonder whether simply replacing $M(s)$ with $sM(s)$ and $N(s)$
with $-\frac{1}{s}N(s)$ in Figure~\ref{fig:interconnection} to obtain a negative feedback
interconnection and using standard positive real analysis~\cite{Anderson-Vongpanitlerd:73,Khalil:96}
would do the trick? The answer to this question is ``no, it does not'' as $\frac{1}{s}N(s)$ is not
stable, $sM(s)$ is not always guaranteed to be proper, and most importantly positive real analysis
yields an unconditional stability result whereas the interconnection of two systems with negative
imaginary frequency response will always be conditionally stable (see
Theorem~\ref{thm:main_result}). Of course, there are some connections between positive real systems
and systems with negative imaginary frequency response which in fact will be exploited in
Lemma~\ref{thm:Lyap_SS_characterisation_of_C} where we give a complete state-space characterization
of systems with negative imaginary frequency response, but the differences are also important and
should not be discounted (\eg~arising from frequencies $\omega=0$ and $\omega=\infty$).

\section{\uppercase{Some technical results}}
\label{sec:technical_results}

In this section, we generate the technical machinery that will enable us to concisely prove the main result in
the next section. First, for the sake of brevity, let us define the following two sets of ``stable systems with negative
imaginary frequency response'' as follows:
  \begin{align}
    \curly{C}&:=\big\{R(s)\in\RHinf^{n\times n}: \nn &\qquad\quad\;
       j[R(\jw)-R(\jw)^*] \geq 0 \;\forall\omega\in(0,\infty)\big\},  \label{eqn:nonstrict_class} \\
    \curly{C}_s&:=\big\{R(s)\in\RHinf^{n\times n}:  \nn &\qquad\quad\;
       j[R(\jw)-R(\jw)^*] > 0 \;\forall\omega\in(0,\infty)\big\}\subset\curly{C}. \label{eqn:strict_class}
  \end{align}

The first lemma gives a complete state-space characterisation of elements in $\curly{C}$. It hence also
provides a test to easily check whether a transfer function matrix belongs to set
$\curly{C}$ or not. Testing whether a transfer function belongs to set $\curly{C}_s$ or not requires an
additional check on transmission zeros of $R(s)-\adj{R}(s)$ in the open frequency region $(0,\infty)$.

\begin{lemma} \label{thm:Lyap_SS_characterisation_of_C}
  Let $\ABCD{A}{B}{C}{D}$ be a minimal state-space realisation of a transfer  matrix
  $R(s)$. Then $R(s)\in\curly{C}$ if and only if $A$ is Hurwitz, $D=D^*$ and 
  there exists a real matrix $Y>0$ such that
  \[ AY+YA^*\leq 0 \quad\text{ and }\quad B=-AYC^*. \]    
\end{lemma}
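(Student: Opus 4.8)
The plan is to prove both directions by connecting the negative-imaginary condition on $R(s)$ to a bounded-real / positive-real statement about an auxiliary transfer function, and then invoking the (K-Y-P) lemma. The natural auxiliary object is $j[R(s)-\adj{R}(s)]$, which on the imaginary axis equals $j[R(\jw)-R(\jw)^*]$, so membership in $\curly{C}$ is exactly the statement that $\tfrac{1}{2}(R-\adj{R})\cdot(-j)$ has nonnegative Hermitian part on the $\jw$-axis, i.e. it is positive real (in a suitable sense; one must track the behaviour at $\omega=0$ and $\omega=\infty$ carefully, which is the reason for the $D=D^*$ condition appearing separately). For the realization bookkeeping, start from the minimal realization $\ABCD{A}{B}{C}{D}$ of $R(s)$; then $\adj{R}(s)=R(-s)^T$ has realization $\ABCD{-A^T}{-C^T}{B^T}{D^T}$, and hence $R(s)-\adj{R}(s)$ has the (generically non-minimal) block realization with state matrix $\diag(A,-A^T)$, input matrix $\matrixB{B \\ -C^T}$, output matrix $\matrixB{C & B^T}$ and feedthrough $D-D^T$.

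For the ``only if'' direction, assume $R(s)\in\curly{C}$. Since $R\in\RHinf$ and the realization is minimal, $A$ is Hurwitz. Evaluating $j[R(\jw)-R(\jw)^*]\geq 0$ as $\omega\to\infty$ forces $D=D^*$ (the feedthrough term of $R-\adj R$ is $D-D^T$, and $-j$ times the limit of the frequency response is $D^T-D$, which must be both $\geq 0$ and, by the same argument applied at $-\omega$ via realness, $\leq 0$). Now apply the positive-real lemma to $-j[R(s)-\adj{R}(s)]$: nonnegativity on the axis yields a Lyapunov-type certificate. The cleanest route is to observe that $B=-AYC^*$ is equivalent to $CY = -C A^{-1} B$ being... no — rather, rearrange $B=-AYC^*$ as $AYC^* + B = 0$, premultiply suitably and use $AY+YA^*\le0$ to build the storage-function inequality $\matrixB{A^*X+XA & XB - C^* \\ B^*X - C & -(D+D^*)}\le0$ style relation for the positive-real object; here $X=Y^{-1}$. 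Concretely, one shows that the two stated conditions on $Y$ are precisely the dissipation inequality for $-j[R-\adj R]$ being positive real, after using minimality of $R$ to collapse the product realization. I would verify this by direct substitution: given a positive-real certificate $P$, set $Y=P^{-1}$ (or a Schur-complement-reduced version of $P$) and check $AY+YA^*\le 0$ and $B+AYC^*=0$ drop out.

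For the ``if'' direction, assume $A$ Hurwitz, $D=D^*$, and $Y>0$ with $AY+YA^*\le0$, $B=-AYC^*$. Then for any $\omega$, write $R(\jw)-R(\jw)^* = C(\jw I-A)^{-1}B - B^*(-\jw I - A^T)^{-1}C^* + (D-D^*)$; the last term vanishes. Substitute $B=-AYC^*$ and $B^* = -CYA^*$ (using $Y$ real symmetric), then use the algebraic identity $(\jw I - A)^{-1}A = -I + \jw(\jw I-A)^{-1}$ to telescope the expression into $-(\jw I - A)^{-1}(AY+YA^*)(\jw I-A)^{-*}$ times $C^*$ on the right and $C$ on the left, up to a sign and factor of $\jw$; multiplying by $j$ and collecting gives $j[R(\jw)-R(\jw)^*] = C(\jw I - A)^{-1}(-AY-YA^*)(-\jw I - A^*)^{-1}C^* \ge 0$ since $-AY-YA^*\ge0$. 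This is the core computation and it is essentially the same manipulation that appears in proofs of the positive-real lemma; it is routine once the substitution $B=-AYC^*$ is in hand.

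The main obstacle I anticipate is not the algebra but the boundary frequencies $\omega=0$ and $\omega=\infty$: the positive-real lemma in its textbook form assumes either strict properness or a controllability/observability condition at infinity, and here we are deliberately in a setting (relative degree $\geq 2$ examples, co-located sensors) where care is needed. The role of the explicit condition $D=D^*$ is to handle $\omega=\infty$, and the fact that $Y>0$ is required (rather than $\ge 0$) together with minimality of the realization of $R$ — not of $R-\adj R$ — is what lets the argument go through at $\omega=0$, where $R(0)-R(0)^* = 0$ automatically and so no extra constraint is imposed there. I would therefore be careful to prove the needed variant of the K-Y-P lemma (or cite a version valid for non-strictly-proper, not-necessarily-sign-definite frequency response) rather than quoting the most common statement, and to check that minimality of $R$ is genuinely used to rule out pole-zero cancellations between $R$ and $\adj R$ that could otherwise weaken the conclusion.
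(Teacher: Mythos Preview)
Your ``if'' direction by direct substitution is essentially correct and is in fact a cleaner route than the paper's for that half (the paper uses the Positive Real Lemma in both directions). One small slip: the final expression should carry a factor of $\omega$, namely
\[
  j[R(\jw)-R(\jw)^*] \;=\; \omega\, C(\jw I - A)^{-1}(-AY-YA^*)(-\jw I - A^*)^{-1}C^*,
\]
which is still $\geq 0$ on $(0,\infty)$, so the conclusion stands.

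The genuine gap is in your ``only if'' direction. Your auxiliary object $R(s)-\adj{R}(s)$ has the block state matrix $\diag(A,-A^T)$, and since $A$ is Hurwitz, $-A^T$ is anti-Hurwitz: the auxiliary system has poles in the open right half plane and the block realisation is not minimal. The standard Positive Real Lemma therefore does not apply, and there is no off-the-shelf KYP variant that will hand you a \emph{positive definite} storage function of the right size here. Your remark about ``Schur-complement-reducing'' a certificate $P$ does not address this; the obstruction is structural, not bookkeeping. This is exactly the point where the paper does something you have not anticipated.

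The paper's device is to set $\hat{R}(s):=R(s)-D$ and work with $F(s):=s\hat{R}(s)$. Because $\hat{R}$ is strictly proper, $F$ is proper with minimal realisation $\ABCD{A}{B}{CA}{CB}$: the \emph{same} Hurwitz $A$, the same state dimension, and minimality is preserved since $A$ is nonsingular. On the axis one has $F(\jw)+F(\jw)^* = \omega\cdot j[\hat{R}(\jw)-\hat{R}(\jw)^*]$, so the negative-imaginary condition on $R$ becomes exactly positive-realness of $F$ on all of $\Real$ (the factor $\omega$ you discovered is absorbed by the $s$, and the endpoints $\omega=0,\infty$ come for free). Now the textbook Positive Real Lemma applies to $F$ and yields $X>0$, $L$, $W$ with $XA+A^*X=-L^*L$, $B^*X+W^*L=CA$, $CB+(CB)^*=W^*W$; a short completion-of-squares collapses this to $Y:=X^{-1}>0$, $AY+YA^*\leq 0$, $B=-AYC^*$. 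If you want to repair your argument rather than replace it, the cleanest fix is precisely this: multiply by $s$ instead of subtracting $\adj{R}$.
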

\begin{proof}
The two statements are connected via a sequence of equivalent reformulations:
  \begin{enumerate}[(a)]
    \item $R(s)=\ABCD{A}{B}{C}{D}\in\curly{C}$.
    \item $\hat{R}(s):=(R(s)-D)=\ABCD{A}{B}{C}{0}\in\curly{C}$ and $D=D^*$.

      This equivalence follows on nothing that $R(s)\in\curly{C}$ implies
      $j[R(\infty)-R(\infty)^T] \geq 0$ via continuity and a limiting argument, which in turn 
      implies $-j[R(\infty)-R(\infty)^T]=(j[R(\infty)-R(\infty)^T])^T \geq 0$. Then, these two
      inequalities together imply $R(\infty)=R(\infty)^T$.
    \item $F(s):=s\hat{R}(s)=\ABCD{A}{B}{CA}{CB}\in\RHinf^{n\times n}$, $F(\jw)+F(\jw)^*\geq
      0\;\forall\omega\in\Real$, $A$ is Hurwitz and $D=D^*$.
    \item $A$ is Hurwitz, $D=D^*$ and $\exists X> 0, L, W$ such that
          \begin{align*}
            &XA + A^*X = - L^*L, \\
            &B^*X + W^*L = CA, \\
            &CB + (CB)^* = W^*W.
          \end{align*}
          This equivalence is via the Positive Real Lemma (see for
          example~\cite{Anderson-Vongpanitlerd:73} together with the
          fact that $(CA,A)$ is observable or~\cite[Thms~13.25,13.26]{Zhou-Doyle-Glover:96} together with the 
          fact that $(L^*L,A)$ is observable).
    \item $A$ is Hurwitz, $D=D^*$ and $\exists X>0, L, W$ such that
          \begin{align*}
            &XA + A^*X = - L^*L, \\
            &B=X^{-1}(A^*C^*-L^*W), \\
            &CX^{-1}A^*C^* + CAX^{-1}C^* \\
            &\hspace{3em}
            = W^*W + CX^{-1}L^*W + W^*LX^{-1}C^*.
          \end{align*}
    \item $A$ is Hurwitz, $D=D^*$ and $\exists X>0, L, W$ such that 
          \begin{align*}
            &XA + A^*X = - L^*L, \\
            &B=X^{-1}(A^*C^*-L^*W), \\
            &W=-LX^{-1}C^* \quad\text{via a completion of squares}.
          \end{align*}
    \item $A$ is Hurwitz, $D=D^*$ and $\exists X>0, L$ such that $XA + A^*X = - L^*L$ and $B=X^{-1}(A^*C^*+L^*LX^{-1}C^*)$.
    \item $A$ is Hurwitz, $D=D^*$ and $\exists X>0$ such that $XA + A^*X \leq 0$ and $B=-AX^{-1}C^*$.
  \end{enumerate}
\end{proof}

The second lemma relates the gain at zero frequency and the gain at infinite frequency for systems
with negative imaginary frequency response.

\begin{lemma} \label{thm:diff_between_freq_zero_and_infty}
  Given $R(s)\in\curly{C}$ (resp.~$\curly{C}_s$), then $R(0)-R(\infty) \geq 0$ (resp.~$>0$).
\end{lemma}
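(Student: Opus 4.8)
The plan is to read $R(0)-R(\infty)$ directly off the state-space realisation supplied by Lemma~\ref{thm:Lyap_SS_characterisation_of_C}. Since $R(0)$ and $R(\infty)$ do not depend on the choice of realisation, I would fix a \emph{minimal} realisation $R(s)=\ABCD{A}{B}{C}{D}$ and invoke Lemma~\ref{thm:Lyap_SS_characterisation_of_C}: $A$ is Hurwitz, $D=D^*$, and there is a real $Y>0$ with $AY+YA^*\leq 0$ and $B=-AYC^*$. As $A$ is Hurwitz it is invertible, so $R(\infty)=D$ and $R(0)=D-CA^{-1}B$, and substituting $B=-AYC^*$ yields
\[ R(0)-R(\infty) \;=\; -CA^{-1}B \;=\; -CA^{-1}\!\left(-AYC^*\right) \;=\; CYC^*. \]
Because $Y>0$ is real and $C$ is real, $CYC^*=CYC^T\geq 0$, which settles the $\curly{C}$ case.

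For $R(s)\in\curly{C}_s$ it remains to improve $CYC^*\geq 0$ to $CYC^*>0$, and the only way this can fail is if $C$ does not have full row rank. I would rule that out by a short frequency-domain argument: if $v\neq 0$ were a real vector with $v^*C=0$, then $v^*R(\jw)v = v^*Dv + v^*C(\jw I-A)^{-1}Bv = v^*Dv$ for every $\omega$, which is real since $D$ is real and symmetric; hence $v^*\,j[R(\jw)-R(\jw)^*]\,v = -2\,\Im\!\big(v^*R(\jw)v\big) = 0$ for all $\omega\in(0,\infty)$, contradicting $j[R(\jw)-R(\jw)^*]>0$. Thus $C$ has full row rank, so $C^*v\neq 0$ whenever $v\neq 0$, and then $v^*CYC^*v = (C^*v)^*Y(C^*v)>0$; that is, $R(0)-R(\infty)=CYC^*>0$.

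The $\curly{C}$ part is a one-line calculation once Lemma~\ref{thm:Lyap_SS_characterisation_of_C} is in hand, so I do not expect any difficulty there. The step needing the most care is the strict case, specifically the claim that strictness of the imaginary-axis inequality forces $C$ to have full row rank; the constant-direction argument above handles this, and the only hypothesis it really leans on is minimality of the realisation (so that Lemma~\ref{thm:Lyap_SS_characterisation_of_C} may be applied). As an alternative that avoids Lemma~\ref{thm:Lyap_SS_characterisation_of_C}, one could note, as in step~(c) of its proof, that $F(s):=s\big(R(s)-R(\infty)\big)$ is positive real with $F(0)=0$ and then identify $R(0)-R(\infty)$ with $F'(0)=-CA^{-1}B$; but reusing the Lyapunov characterisation is shorter and more direct.
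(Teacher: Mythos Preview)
Your proof is correct and follows essentially the same route as the paper: both use Lemma~\ref{thm:Lyap_SS_characterisation_of_C} to write $R(0)-R(\infty)=CYC^*\geq 0$, and both upgrade to strictness via a kernel argument against the frequency-domain inequality. Your strict-case argument is in fact slightly leaner than the paper's---you only need $v^*C=0$ to force $v^*R(\jw)v$ real, whereas the paper additionally invokes $B=-AYC^*$ to obtain $\hat{R}(\jw)x=0$ and then argues nonsingularity of $\hat{R}(\jw)$---but the underlying idea is the same.
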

\begin{proof}
  Given a minimal realisation $R(s)=\ABCD{A}{B}{C}{D}\in\curly{C}$ and
  applying Lemma~\ref{thm:Lyap_SS_characterisation_of_C}, we get
  \begin{equation}
    R(0) - R(\infty) = -CA^{-1}B 
               = CA^{-1}AYC^* 
               = CYC^* \geq 0 \label{eqn:CXC}
  \end{equation}
  which concludes the proof for the non-strict inequality.

  Now, we focus on $R(s)\in\curly{C}_s\iff\hat{R}(s):=(R(s)-D)\in\curly{C}_s$ (since $D=D^*$ by
  Lemma~\ref{thm:Lyap_SS_characterisation_of_C}) and suppose there exists an
  $x\in\Real^{n\times n}$ such that $\hat{R}(0)x=0$. Then, it follows that $CYC^*x=0$ which
  implies that $C^*x=0$ as $Y>0$. This then also gives that $Bx=0$ via $B=-AYC^*$ which yields 
  \[ \hat{R}(\jw)x = C(\jw I -A)^{-1}Bx = 0 \quad\forall\omega\in\Real. \]
  But $j[\hat{R}(\jw)-\hat{R}(\jw)^*] > 0\;\forall\omega\in(0,\infty)$ implies that $\hat{R}(\jw)$
  is nonsingular for all $\omega\in(0,\infty)$ and hence the only possible $x\in\Real^{n\times n}$
  such that $\hat{R}(0)x=0$ is $x=0$. This shows that $\hat{R}(0)$ is also nonsingular and thus 
  $\hat{R}(0)>0$. This concludes the proof.
\end{proof}

The following lemma gathers some straightforward computations which help us understand properties of
systems with negative imaginary frequency response.

\begin{lemma} \label{thm:properties}
  Given $R(s)\in\curly{C}$, $\Delta(s)\in\curly{C}$ and $R_s(s)\in\curly{C}_s$. Then
  \[ R(s)+\Delta(s)\in\curly{C}\quad\text{ and }\quad R_s(s)+\Delta(s)\in\curly{C}_s. \]
\end{lemma}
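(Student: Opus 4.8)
The plan is to observe that both assertions reduce to two elementary facts: that $\RHinf^{n\times n}$ is closed under pointwise addition, and that the map $R(s)\mapsto j[R(\jw)-R(\jw)^*]$ is $\Real$-linear in $R$ while carrying elements of $\curly{C}$ (resp.\ $\curly{C}_s$) to pointwise Hermitian positive semidefinite (resp.\ positive definite) matrices on $(0,\infty)$. No nontrivial machinery is needed; the result is a direct consequence of the cone/convexity structure of the defining conditions in \eqref{eqn:nonstrict_class}--\eqref{eqn:strict_class}.

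First I would record that, since $R(s),\Delta(s),R_s(s)\in\RHinf^{n\times n}$ (all of the common dimension $n\times n$), the sums $R(s)+\Delta(s)$ and $R_s(s)+\Delta(s)$ are again real-rational, proper and stable, hence lie in $\RHinf^{n\times n}$; stable real-rational transfer matrices of a fixed size form a ring under pointwise addition. Thus the membership question for each sum comes down entirely to verifying the frequency-domain inequality.

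Next, I would use that for each fixed $\omega\in(0,\infty)$,
\[ j\big[(R+\Delta)(\jw)-(R+\Delta)(\jw)^*\big] = j\big[R(\jw)-R(\jw)^*\big] + j\big[\Delta(\jw)-\Delta(\jw)^*\big], \]
by linearity of addition and of the conjugate-transpose. For $R,\Delta\in\curly{C}$ both summands on the right are Hermitian and $\geq 0$, so the left-hand side is Hermitian and $\geq 0$ for every $\omega\in(0,\infty)$, giving $R(s)+\Delta(s)\in\curly{C}$. For the strict claim, with $R_s\in\curly{C}_s$ the first summand is Hermitian and $>0$ while the second is Hermitian and $\geq 0$; since a positive definite matrix plus a positive semidefinite matrix is positive definite, the sum is $>0$ for every $\omega\in(0,\infty)$, so $R_s(s)+\Delta(s)\in\curly{C}_s$.

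I do not expect any genuine obstacle here; the only point deserving a word of care is the implicit standing assumption that $R$, $\Delta$ and $R_s$ all carry the same dimension $n\times n$, so that the sums are well defined and the membership statements refer to the same sets $\curly{C}$ and $\curly{C}_s$. If one wished, one could alternatively phrase the whole argument in terms of the state-space characterisation of Lemma~\ref{thm:Lyap_SS_characterisation_of_C} by taking a block-diagonal realisation of the sum and a block-diagonal Lyapunov certificate $\diag(Y_1,Y_2)$, but the frequency-domain argument above is shorter and makes the strict case immediate.
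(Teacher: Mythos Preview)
Your proposal is correct and matches the paper's approach: the paper's own proof is simply the word ``Trivial,'' and you have merely written out the obvious frequency-domain argument (closure of $\RHinf^{n\times n}$ under addition, linearity of $R\mapsto j[R(\jw)-R(\jw)^*]$, and the fact that a sum of a positive definite and a positive semidefinite Hermitian matrix is positive definite) that makes it so. There is nothing to add.
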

\begin{proof}
Trivial.
\end{proof}

The final technical lemma provides a matrix result that states that unity is not in the spectrum of matrix
$AB$ when matrices $A$ and $B$ satisfy certain negative imaginary properties.

\begin{lemma} \label{thm:det_I-AB}
  Given $A\in\Complex^{n\times n}$ with $j[A-A^*]\geq 0$ and 
  $B\in\Complex^{n\times n}$ with $j[B-B^*]>0$. Then,
  \[ \det(I-AB)\neq 0. \]
\end{lemma}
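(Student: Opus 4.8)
The plan is to prove the contrapositive-style statement directly: assume $(I-AB)x = 0$ for some nonzero $x\in\Complex^n$ and derive a contradiction. From $ABx = x$ I would set $y := Bx$, so that $Ay = x$ and $By = Bx = x - $ wait, more carefully: $x = ABx$, so writing $y = Bx$ gives $Ay = x$; hence $x$ and $y$ are linked by $Ay = x$ and (trivially) $Bx = y$. The key will be to extract a sign contradiction from the two Hermitian-form inequalities $x^*\,j(B-B^*)\,x > 0$ and $y^*\,j(A-A^*)\,y \geq 0$, i.e.\ from $\Im(x^*Bx) < 0$ (note $j[B-B^*]>0 \iff \tfrac{1}{2j}(B-B^*)<0$ in the appropriate sign convention, so $x^*Bx$ has strictly negative imaginary part) and $\Im(y^*Ay) \geq 0$ strictly the wrong sign, combined. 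Actually the cleanest route: since $Ay = x$, we have $x^*Bx = y^*A^*Bx = y^*A^*y$ (using $Bx = y$). Therefore $x^*Bx = y^*A^*y = \overline{y^*Ay}$, so $\Im(x^*Bx) = -\Im(y^*Ay)$.

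Now I would invoke the hypotheses: $j[A-A^*]\geq 0$ says $\tfrac{1}{2j}(A - A^*) \geq 0$, i.e.\ $\Im(y^*Ay) = y^*\bigl(\tfrac{1}{2j}(A-A^*)\bigr)y \cdot$ (up to the factor convention) $\geq 0$; hence $\Im(x^*Bx) = -\Im(y^*Ay) \leq 0$. On the other hand $j[B-B^*]>0$ forces $\Im(x^*Bx)$ to be strictly positive \emph{provided $x\neq 0$}, which it is by assumption. This is the contradiction, so $\det(I-AB)\neq 0$. The one genuine subtlety — and the step I expect to need the most care — is the sign/factor bookkeeping in the identity $j[M - M^*] \geq 0 \iff \Im(z^*Mz) \leq 0$ for all $z$ (since $z^*(j[M-M^*])z = j(z^*Mz - \overline{z^*Mz}) = j(2i\,\Im(z^*Mz)) = -2\,\Im(z^*Mz)$), and making sure the strict version genuinely gives $\Im(x^*Bx) < 0$ for every nonzero $x$; once that is pinned down, everything collapses to the single line $\Im(x^*Bx) = -\Im(y^*Ay)$ forcing a contradiction between $>0$ and $\leq 0$.

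A secondary point worth a sentence in the writeup: the argument never needs $x \neq y$ or any nondegeneracy of $A$ itself — if $y = Bx = 0$ then $x = Ay = 0$, contradicting $x\neq 0$ already, so we may assume $y\neq 0$ too, though in fact we only ever use $x\neq 0$ in the final strict inequality. I would present the chain of equalities $x^*Bx = (Ay)^*Bx = y^*A^*Bx = y^*A^*y$ explicitly, then the imaginary-part identity, then the two sign constraints, and conclude.
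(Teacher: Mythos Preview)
Your argument is correct once the sign bookkeeping is settled, and you do settle it by the end: with $j=i$, one has $z^*\,j(M-M^*)\,z = -2\,\Im(z^*Mz)$, so $j[M-M^*]\geq 0$ is equivalent to $\Im(z^*Mz)\leq 0$ for all $z$, and strictly so for the $B$ hypothesis. The core identity $x^*Bx = (Ay)^*Bx = y^*A^*y = \overline{y^*Ay}$ then gives $\Im(x^*Bx)=-\Im(y^*Ay)\geq 0$, contradicting $\Im(x^*Bx)<0$ for $x\neq 0$. (Your phrase ``contradiction between $>0$ and $\leq 0$'' has the signs swapped; clean that up in the final version.)

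The paper takes a different, more algebraic route: it rewrites the hypotheses as $(jA)+(jA)^*\geq 0$ and, using that $jB$ is then invertible, $(jB)^{-1}+(jB)^{-*}>0$; it then factors
\[
\det(I-AB)=\det\bigl(I+(jA)(jB)\bigr)=\det\bigl((jA)+(jB)^{-1}\bigr)\,\det(jB),
\]
and both factors are nonzero because a matrix with strictly positive definite Hermitian part is invertible. Your approach is more elementary in that it works directly with a hypothetical null vector and never inverts $B$; the paper's is a two-line determinant trick but tacitly uses $B^{-1}$. Both are short and of comparable depth; yours would read best if you drop the exploratory sign discussion and simply state the convention $j[M-M^*]\geq 0 \Leftrightarrow \Im(z^*Mz)\leq 0$ at the outset.
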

\begin{proof}
  The suppositions can be rewritten as  $(jA) + (jA)^*\geq 0$ and $(jB)^{-1}+(jB)^{-*}>0$. Then
  $\det(I-AB) = \det(I+(jA)(jB))=\det((jA)+(jB)^{-1})\det(jB) \neq 0$.
\end{proof}

\section{\uppercase{The Main Result}}
\label{sec:main_result}

The key result in this paper is Theorem~\ref{thm:main_result} below. It is an analysis theorem that states
that provided one system belongs to class $\curly{C}$ and the other system belongs to class
$\curly{C}_s$, then a necessary and sufficient condition\footnote{Under some assumptions on the
  gains of the systems at infinite frequency.} for internal stability of a positive feedback
interconnection of these two systems is to check that the DC loop gain (\ie~the loop gain at zero
frequency) is less than unity.

\begin{theorem} \label{thm:main_result}
  Given $M(s)\in\curly{C}$ and $N(s)\in\curly{C}_s$ that also satisfy $M(\infty)N(\infty)=0$
  and $N(\infty)\geq 0$. Then,
  \[ [M(s),N(s)] \text{ is internally stable} \;\:\iff\;\:
  \Meig(M(0)N(0))<1. \]  
\end{theorem}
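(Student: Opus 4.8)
The plan is to use the Nyquist stability criterion for the positive feedback interconnection $[M,N]$. Since both $M$ and $N$ are in $\RHinf$, internal stability of $[M(s),N(s)]$ is equivalent to $\det(I-M(s)N(s))$ having no zeros in the closed right half plane $\CRHP$, together with the winding number condition being trivially satisfied (there are no open-loop RHP poles, so the requirement reduces to the Nyquist plot of $\det(I-M(s)N(s))$ not encircling and not passing through the origin). So the whole theorem reduces to: $[M,N]$ internally stable $\iff$ $\det(I-M(s)N(s))\neq 0$ for all $s\in\CRHP$ (including $s=\infty$). I would first dispose of $s=\infty$: there $\det(I-M(\infty)N(\infty))=\det(I)=1\neq 0$ by the hypothesis $M(\infty)N(\infty)=0$, so infinite frequency is never the problem.

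Next I would handle the points on the imaginary axis. For $\omega\in(0,\infty)$, set $A=M(j\omega)$ and $B=N(j\omega)$; then $j[A-A^*]\geq 0$ since $M\in\curly{C}$ and $j[B-B^*]>0$ since $N\in\curly{C}_s$, so Lemma~\ref{thm:det_I-AB} gives $\det(I-M(j\omega)N(j\omega))\neq 0$ immediately. The endpoints $\omega=0$ and $\omega=\infty$ need separate treatment because there $j[R(j\omega)-R(j\omega)^*]=0$ and the strict lemma does not apply; $\omega=\infty$ is already done, and $\omega=0$ is exactly where the DC condition $\Meig(M(0)N(0))<1$ enters — $\det(I-M(0)N(0))\neq 0$ is implied by (in fact weaker than) that eigenvalue condition, provided one knows $M(0)N(0)$ has only real eigenvalues. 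That last fact I would extract from the structure in Lemma~\ref{thm:Lyap_SS_characterisation_of_C}: with $M(0)=D_M+C_M Y_M C_M^*$ (so $M(0)=M(0)^*\geq 0$ after using $M(\infty)=D_M$ and Lemma~\ref{thm:diff_between_freq_zero_and_infty}, but more to the point $M(0)$ is Hermitian) and similarly $N(0)=N(0)^*\geq N(\infty)\geq 0$, the product $M(0)N(0)$ of two Hermitian matrices with $N(0)\geq 0$ is similar to a Hermitian matrix (via $N(0)^{1/2}M(0)N(0)^{1/2}$ if $N(0)>0$, with a limiting or kernel-splitting argument otherwise), hence has real spectrum; so $\Meig(M(0)N(0))$ is well defined and $\det(I-M(0)N(0))=\prod(1-\lambda_i)\neq 0$ exactly when no $\lambda_i=1$, which holds iff $\Meig(M(0)N(0))<1$ given all $\lambda_i$ are real and $\geq 0$ — wait, I also need to rule out $\lambda_i>1$ with the determinant still nonzero; that is handled by the homotopy argument below, not at the single point $s=0$.

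The substantive part — and the main obstacle — is the interior of $\CRHP$ and the connection between the open-RHP behaviour and the boundary value at $s=0$. Here I would run a homotopy/continuity argument: consider the family $N_\tau(s):=\tau N(s)$ for $\tau\in[0,1]$. At $\tau=0$, $\det(I-M(s)\cdot 0)=1\neq 0$ everywhere in $\CRHP$, so $[M,0]$ is internally stable. For each $\tau\in(0,1]$, $\tau N(s)\in\curly{C}_s$ still (scaling by a positive constant preserves the strict negative-imaginary property and stability), and $M(\infty)(\tau N(\infty))=0$, $\tau N(\infty)\geq 0$, so by the $j\omega$-axis analysis above $\det(I-M(j\omega)(\tau N(j\omega)))\neq 0$ for all $\omega\in(0,\infty)$ and at $\omega=\infty$, uniformly in $\tau$. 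Thus as $\tau$ increases the number of $\CRHP$-zeros of $\det(I-\tau M(s)N(s))$ can only change by zeros crossing the imaginary axis, and the only place that can happen is $\omega=0$ (everywhere else is blocked for all $\tau$). At $\omega=0$, $\det(I-\tau M(0)N(0))=\prod_i(1-\tau\lambda_i)$, which vanishes for some $\tau\in(0,1]$ iff some $\lambda_i\geq 1$, i.e.\ iff $\Meig(M(0)N(0))\geq 1$. Hence: if $\Meig(M(0)N(0))<1$, no zero ever touches the axis for $\tau\in[0,1]$, so by continuity of roots the $\CRHP$-zero count stays at $0$, giving internal stability of $[M,N]$; conversely, if $\Meig(M(0)N(0))\geq 1$, either $\det(I-M(0)N(0))=0$ (a $j\omega$-axis zero, so not internally stable outright) or some eigenvalue crosses $1$ at an intermediate $\tau_0<1$, forcing a $\CRHP$-zero to be ``born'' at the origin and move into the open RHP as $\tau$ increases past $\tau_0$, so $[M,N]$ has a RHP zero of the return difference and is unstable. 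The delicate points to get right are: (i) justifying that roots of $\det(I-\tau M(s)N(s))$ vary continuously and that a change in the open-RHP count must be accompanied by a boundary crossing — standard, via the argument principle applied to a large half-disc contour with a small semicircular indentation; (ii) making sure no root escapes to or arrives from $s=\infty$, which is why the $M(\infty)N(\infty)=0$ hypothesis is essential; and (iii) the sign/eigenvalue bookkeeping at $\omega=0$ — here the facts that $M(0)$ is Hermitian, $N(0)\geq N(\infty)\geq 0$ is Hermitian positive semidefinite, and consequently $M(0)N(0)$ has real nonnegative-ordered spectrum (from Lemmas~\ref{thm:Lyap_SS_characterisation_of_C} and~\ref{thm:diff_between_freq_zero_and_infty}) are exactly what make ``$\det\neq 0$ for all $\tau\in[0,1]$'' equivalent to ``$\Meig(M(0)N(0))<1$''.
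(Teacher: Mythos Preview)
Your route is genuinely different from the paper's. The paper never touches Nyquist or homotopy: it takes minimal realisations of $M$ and $N$, invokes Lemma~\ref{thm:Lyap_SS_characterisation_of_C} to obtain $Y,\bar Y>0$, writes the closed-loop state matrix as $\mathcal A=\Phi T$ with $\Phi=\diag(AY,\bar A\bar Y)$ and an explicit symmetric $T$, and shows (using $\Phi+\Phi^*\le 0$ together with Lemma~\ref{thm:det_I-AB} to exclude $j\omega$-eigenvalues) that $\mathcal A$ Hurwitz is \emph{equivalent} to $T>0$. A chain of Schur complements then collapses $T>0$ to $N(0)^{-1}-M(0)>0$, i.e.\ $\Meig(M(0)N(0))<1$. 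Because every step is an equivalence, necessity and sufficiency fall out simultaneously; your approach, by contrast, separates the two directions.

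Your sufficiency direction is sound. Lemma~\ref{thm:det_I-AB} applied to $M(j\omega)$ and $\tau N(j\omega)$ blocks imaginary-axis zeros for all $\omega\in(0,\infty)$ and all $\tau\in(0,1]$; the hypothesis $M(\infty)N(\infty)=0$ handles $\omega=\infty$; and if $\Meig(M(0)N(0))<1$ then $\det(I-\tau M(0)N(0))\neq 0$ for all $\tau\in[0,1]$, so no crossing at $\omega=0$ either. Root continuity then pins the open-RHP count at zero. (Two small clean-ups: $N(0)>0$ holds outright, since Lemma~\ref{thm:diff_between_freq_zero_and_infty} gives $N(0)-N(\infty)>0$ strictly for $N\in\curly{C}_s$ and $N(\infty)\ge 0$ by hypothesis, so no limiting argument is needed; and the eigenvalues of $M(0)N(0)\sim N(0)^{1/2}M(0)N(0)^{1/2}$ are real but need not be nonnegative, since $M(0)=M(0)^*$ is not assumed $\ge 0$ --- harmless, as negative eigenvalues never trigger a crossing.)

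The genuine gap is in necessity. You assert that when $\Meig(M(0)N(0))>1$, the root that touches $s=0$ at $\tau_0=1/\Meig$ is ``forced'' to move into the open RHP for $\tau>\tau_0$. That is exactly the point requiring proof, and nothing in your outline supplies it: a priori the root could graze the origin and retreat into the open LHP, or if several eigenvalues exceed $1$ the individual crossings could carry opposite signs and cancel in the net RHP count. The sign-change of $\prod_i(1-\tau\lambda_i)$ at $\tau_0$ only tells you a real root passes \emph{through} $s=0$, not in which direction. Closing this needs real additional work --- either a local computation of $ds/d\tau$ at the crossing using the structural relation $B=-AYC^*$ from Lemma~\ref{thm:Lyap_SS_characterisation_of_C}, or a global eigenlocus argument (Lemma~\ref{thm:det_I-AB} with $B=\tfrac{1}{\mu}N(j\omega)$ for arbitrary $\mu>0$ actually shows $M(j\omega)N(j\omega)$ has \emph{no} positive real eigenvalue for $\omega\in(0,\infty)$, so each locus starting at $\lambda_i>1$ and ending at $0$ meets the ray $(1,\infty)$ only at $\omega=0$; but you would still have to pin down that all such crossings carry the same orientation). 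The paper's Lyapunov chain sidesteps the crossing-direction question entirely, which is precisely what it buys over your more geometric picture.
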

\begin{proof}
Let $M(s)=\ABCD{A}{B}{C}{D}$ and $N(s)=\ABCD{\bar{A}}{\bar{B}}{\bar{C}}{\bar{D}}$ be minimal
realizations. Then, by the suppositions of this theorem and Lemma~\ref{thm:Lyap_SS_characterisation_of_C}, 
$A$ is Hurwitz, $D=D^*$, $\bar{A}$ is Hurwitz, $\bar{D}=\bar{D}^*\geq 0$, $D\bar{D}=0$ and 
there exists real matrices $Y>0$ and $\bar{Y}>0$ such that
\begin{gather}
  AY+YA^*\leq 0 \quad\text{ and }\quad B=-AYC^*, \label{eqn:for_P} \\
  \bar{A}\bar{Y}+\bar{Y}\bar{A}^*\leq 0 \quad\text{ and }\quad \bar{B}=-\bar{A}\bar{Y}\bar{C}^*. \label{eqn:for_Q}
\end{gather}

Now, define $\Phi:=\matrixB{AY & 0 \\ 0 & \bar{A}\bar{Y}}$ and 
$T:=\matrixB{Y^{-1}-C^*\bar{D}C & - C^*\bar{C}\\ -\bar{C}^*C & \bar{Y}^{-1}-\bar{C}^*D\bar{C}}$, and
note that:
\begin{align*}
  &\qquad\quad [M(s),N(s)] \text{ is internally stable} \ds{.2}
  &\iff\quad (I-M(s)N(s))^{-1}= \\ &\hspace{2em}
                \ABCDs{\matrixP{A & B\bar{C} \\ 0 & \bar{A}}+\matrixP{B\bar{D}\\ \bar{B}}\matrixP{C & D\bar{C}}}%
                      {\begin{array}{c} B\bar{D}\\\bar{B}\end{array}}%
                      {C\qquad\qquad\qquad D\bar{C}}{I}\in\RHinf  \displaybreak[2] \ds{.2}
  &\iff\quad \mathcal{A}:=\matrixB{A & B\bar{C} \\ 0 &
    \bar{A}}+\matrixB{B\bar{D}\\\bar{B}}\matrixB{C & D\bar{C}} =\Phi T
             \text{ is Hurwitz} \\ &\hspace{2.75em}
             [\text{as the above realization is stabilizable and detectable}]. \displaybreak[2] \ds{.2}
  &\iff\quad T>0 \\ &\hspace{2.75em}
             \parbox{7.9cm}{$[(\implies)$ Since $\mathcal{A}$ is Hurwitz and $\Phi$ is nonsingular,
             $T$ is nonsingular. Since $\Phi+\Phi^*\leq 0$, it follows that $T\mathcal{A} +
             \mathcal{A}^* T\leq 0$. Consequently, $\mathcal{A}$ is Hurwitz implies $T\geq 0$. But
             $T$ is also nonsingular, therefore $T>0$. \ds{.2} 
             $(\impliedby)$ Since  $\Phi+\Phi^*\leq 0$, it follows that $T\mathcal{A} +
             \mathcal{A}^* T\leq 0$. Consequently, $T>0$ implies $\Re(\lambda_i(\mathcal{A}))\leq
             0\;\forall i$. But $T>0$ and $\Phi$ nonsingular also imply $\mathcal{A}$
             has no eigenvalue at the origin. We now invoke Lemma~\ref{thm:det_I-AB} and use the fact that
             $M(s)\in\curly{C}$ and $N(s)\in\curly{C}_s$ to conclude that $\det(I-M(\jw)N(\jw))\neq
             0\;\forall\omega\in(0,\infty)$, which in turn is equivalent to $\mathcal{A}$ having no eigenvalue at $\jw$
             for all $\omega\in(0,\infty)$. $]$} \displaybreak[2] \ds{.2}
  &\iff\quad \bar{Y}^{-1}-\bar{C}^*D\bar{C}>0 \quad\text{and}\quad \\ &\hspace{2.75em}
             (Y^{-1}-C^*\bar{D}C) -
  C^*\bar{C}(\bar{Y}^{-1}-\bar{C}^*D\bar{C})^{-1}\bar{C}^*C > 0 \displaybreak[2] \ds{.5} 
  &\iff\quad \Meig\left[\bar Y^{\frac{1}{2}}\bar C^*D\bar C\bar
  Y^{\frac{1}{2}}\right] < 1 \quad\text{and}\quad \\ &\hspace{2.75em} 
             Y^{-1}-C^*\bar{D}C - 
  C^*(I-\bar{C}\bar{Y}\bar{C}^*D)^{-1}\bar{C}\bar{Y}\bar{C}^*C > 0 \displaybreak[2] \ds{.5} 
  &\iff\quad  \Meig\left[D\bar C\bar Y \bar C^*\right] < 1
  \quad\text{and}\quad \\ &\hspace{2.75em} 
             Y^{-1}-C^*\bar{D}C - C^*(I-N(0)D)^{-1}(N(0)-\bar{D})C > 0 \\ &\hspace{2.75em} 
             [\text{as } N(0)-\bar{D}=\bar{C}\bar{Y}\bar{C}^* \text{ via~\eqref{eqn:CXC} and } 
             \bar{D}D=0] \displaybreak[2] \ds{.5}
  &\iff\quad \Meig \left[DN(0)\right] < 1 \quad\text{and}\quad \\ &\hspace{2.75em}
             Y^{-1}-C^*(I-N(0)D)^{-1}[\bar{D} + (N(0)-\bar{D})]C > 0 \\ &\hspace{2.75em}
             [\text{as } N(0)-\bar{D}=\bar{C}\bar{Y}\bar{C}^* \text{ via~\eqref{eqn:CXC} and }
             D\bar{D}=0] \displaybreak[2] \ds{.5}
  &\iff\quad N(0)^{-1}-D>0 \quad\text{and}\quad \\ &\hspace{2.75em}
             Y^{-1}-C^*(N(0)^{-1}-D)^{-1}C > 0 \\ &\hspace{2.75em}
             [\text{as } N(0)>\bar{D} \text{ via Lemma~\ref{thm:diff_between_freq_zero_and_infty} and } 
             \bar{D}\geq 0] \displaybreak[2] \ds{.5}
  &\iff\quad \matrixB{Y^{-1} & C^* \\ C & N(0)^{-1}-D}>0 \displaybreak[2] \ds{.5}
  &\iff\quad N(0)^{-1}-D - CYC^*>0 \displaybreak[2] \ds{.5}
  &\iff\quad N(0)^{-1}-M(0)>0 \qquad \\ &\hspace{2.75em}
             [\text{as } M(0)=D+C YC^* \text{ via~\eqref{eqn:CXC}}] \ds{.5}
  &\iff\quad \Meig(M(0)N(0))<1. \ds{-2}
\end{align*}
\end{proof}

One may wonder whether Integral Quadratic Constraint (IQC) theory~\cite{Megretski-Rantzer:97}
captures the sufficiency part  of Theorem~\ref{thm:main_result} or not. This question is subtle and its answer
is non-trivial.  However, in short the answer is ``no, it does not''. The subtlety of the question arises from the
fact that IQC theory deals with the full frequency range $\omega\in\Real$ whereas systems
$N(s)\in\curly{C}_s$ satisfy the frequency domain inequality $j[N(\jw)-N(\jw)^*]>0$ \emph{only} on an open
frequency interval $\omega\in(0,\infty)$. This strict inequality cannot be satisfied at $\omega=0$
because via Lemma~\ref{thm:diff_between_freq_zero_and_infty} we know that $N(0)=N(0)^*$. This fact
causes the main theorem (Theorem~1) in~\cite{Megretski-Rantzer:97}, which underpins IQC
theory, to be inapplicable by violation of its suppositions.

The following corollary is a weaker restatement of the main theorem, written in the same form as the
small-gain theorem or the passivity theorem.

\begin{corollary} \label{thm:main_corollary}
  \begin{enumerate}[I.]
    \item Given $\gamma>0$ and $M(s)\in\curly{C}_s$ with $M(\infty)\geq 0$. Then
      $[\Delta(s), M(s)]$ is internally stable for all $\Delta(s)\in\curly{C}$ satisfying
      $\Delta(\infty)M(\infty)=0$ and $\Meig(\Delta(0))<\gamma$ (resp.~$\leq\gamma$)
      if and only if $\Meig(M(0))\leq\frac{1}{\gamma}$ (resp.~$<\frac{1}{\gamma}$).
    \item Given $\gamma>0$ and $M(s)\in\curly{C}$. Then
      $[\Delta(s), M(s)]$ is internally stable for all $\Delta(s)\in\curly{C}_s$ satisfying
      $\Delta(\infty)\geq 0$, $\Delta(\infty)M(\infty)=0$ and $\Meig(\Delta(0))<\gamma$ (resp.~$\leq\gamma$)
      if and only if $\Meig(M(0))\leq\frac{1}{\gamma}$ (resp.~$<\frac{1}{\gamma}$).
  \end{enumerate}
\end{corollary}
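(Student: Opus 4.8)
The plan is to deduce both parts of the corollary from Theorem~\ref{thm:main_result} together with some elementary manipulations of symmetric matrices. I treat part~I first, since part~II is essentially identical after interchanging the roles of the strict and non-strict classes.

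For part~I, observe that for any admissible $\Delta(s)$ the pair $(\Delta(s),M(s))$ satisfies the hypotheses of Theorem~\ref{thm:main_result}, with $\Delta(s)\in\curly{C}$ in the role of ``$M$'' there and $M(s)\in\curly{C}_s$ in the role of ``$N$'': indeed $M(\infty)\geq0$ and $\Delta(\infty)M(\infty)=0$ are assumed. Hence $[\Delta(s),M(s)]$ is internally stable if and only if $\Meig(\Delta(0)M(0))<1$. By Lemma~\ref{thm:Lyap_SS_characterisation_of_C}, $\Delta(0)$ and $M(0)$ are real symmetric, and by Lemma~\ref{thm:diff_between_freq_zero_and_infty} (strict version) together with $M(\infty)\geq0$ we have $M(0)>0$; consequently $\Delta(0)M(0)$ is similar to the symmetric matrix $M(0)^{1/2}\Delta(0)M(0)^{1/2}$, so $\Meig(\Delta(0)M(0))$ is well defined and equals $\Meig(M(0)^{1/2}\Delta(0)M(0)^{1/2})$. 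The claim thus reduces to the matrix statement: $\Meig(M(0))\leq\frac{1}{\gamma}$ (resp.~$<\frac{1}{\gamma}$) if and only if $\Meig(M(0)^{1/2}\Delta(0)M(0)^{1/2})<1$ for every admissible $\Delta(s)$ with $\Meig(\Delta(0))<\gamma$ (resp.~$\leq\gamma$).

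The ``if'' direction is immediate: $\Meig(\Delta(0))<\gamma$ gives $\Delta(0)<\gamma I$, so $M(0)^{1/2}\Delta(0)M(0)^{1/2}<\gamma M(0)\leq\gamma\,\Meig(M(0))\,I\leq I$ and hence $\Meig(M(0)^{1/2}\Delta(0)M(0)^{1/2})<1$ (the ``resp.'' case is the same, with the first inequality relaxed to non-strict and the last tightened to strict). For ``only if'' I argue by contraposition. Suppose $\Meig(M(0))>\frac{1}{\gamma}$ (resp.~$\geq\frac{1}{\gamma}$); set $\lambda:=\Meig(M(0))$, let $v$ be a unit eigenvector of $M(0)$ for $\lambda$, pick $\gamma'\in[\frac{1}{\lambda},\gamma)$ (resp.~$\gamma'\in[\frac{1}{\lambda},\gamma]$), and put $\Delta(s):=g(s)\,\gamma' vv^*$, where $g(s)=\frac{\omega_n^2}{s^2+2\zeta\omega_n s+\omega_n^2}$ for arbitrary fixed $\zeta,\omega_n>0$ --- a stable scalar system with $g(0)=1$, $g(\infty)=0$ and $-\Im g(\jw)>0$ on $(0,\infty)$, that is, a single mode of the kind appearing in the spillover dynamics of Section~\ref{introduction}. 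Since $\gamma' vv^*\geq0$ one checks directly that $\Delta(s)\in\curly{C}$; moreover $\Delta(\infty)=0$, so $\Delta(\infty)M(\infty)=0$, and $\Delta(0)=\gamma' vv^*$ has $\Meig(\Delta(0))=\gamma'<\gamma$ (resp.~$\leq\gamma$), so $\Delta(s)$ is admissible. But $\Delta(0)M(0)=\gamma' vv^*M(0)$ has rank one and trace $\gamma'v^*M(0)v=\gamma'\lambda\geq1$, hence $\Meig(\Delta(0)M(0))=\gamma'\lambda\geq1$, and Theorem~\ref{thm:main_result} then declares the interconnection not internally stable, contradicting the assumed robust stability. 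This proves part~I.

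For part~II the same reduction applies after swapping the strict and non-strict roles: $(M(s),\Delta(s))$ meets the hypotheses of Theorem~\ref{thm:main_result} since $\Delta(\infty)\geq0$ and, as $M(\infty)$ and $\Delta(\infty)$ are Hermitian, $\Delta(\infty)M(\infty)=0$ also gives $M(\infty)\Delta(\infty)=0$; internal stability is thus equivalent to $\Meig(M(0)\Delta(0))<1$, which equals $\Meig(\Delta(0)M(0))<1$, now with $\Delta(0)>0$ and $M(0)$ a (possibly indefinite) real symmetric matrix. Sufficiency goes through exactly as before. The one genuine obstacle is the ``only if'' construction, because the perturbation must now lie in $\curly{C}_s$ and a rank-deficient matrix cannot be the DC value of a $\curly{C}_s$ element when $n>1$. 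I handle this by regularising: with $\lambda$, $v$, $\gamma'$ as above, set $\Delta(s):=g(s)\big(\gamma' vv^*+\epsilon(I-vv^*)\big)$ for a small $\epsilon\in(0,\gamma']$. Then $\gamma' vv^*+\epsilon(I-vv^*)>0$ forces $\Delta(s)\in\curly{C}_s$ (using $-\Im g(\jw)>0$ on $(0,\infty)$), $\Delta(\infty)=0$ disposes of the infinite-frequency constraints, $\Meig(\Delta(0))=\gamma'<\gamma$ (resp.~$\leq\gamma$), and $v$ is still an eigenvector of $\Delta(0)M(0)$ with eigenvalue $\gamma'\lambda\geq1$, so $\Meig(\Delta(0)M(0))\geq1$ and the interconnection is unstable, giving the required contradiction. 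Checking that one choice of $\gamma'$ and $\epsilon$ simultaneously respects the admissibility bound on $\Meig(\Delta(0))$ and forces $\Meig(\Delta(0)M(0))\geq1$, in both the strict and non-strict variants, is the main point to get right; the remaining verifications are routine.
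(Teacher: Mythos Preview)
Your argument is correct, but it is noticeably more elaborate than the paper's. For sufficiency the two proofs coincide in spirit (yours spells out the symmetric-matrix manipulation that justifies the paper's one-line claim that $\Meig(M(0))\Meig(\Delta(0))<1$ implies $\Meig(\Delta(0)M(0))<1$). The real difference is in necessity: the paper simply takes
\[
\Delta(s)=\frac{1/\Meig(M(0))}{\,s+1\,}\,I,
\]
which lies in $\curly{C}_s$ (hence also in $\curly{C}$), has $\Delta(\infty)=0$, satisfies $\Meig(\Delta(0))=1/\Meig(M(0))$, and gives $\Meig(\Delta(0)M(0))=1$ exactly. This single first-order, full-rank perturbation handles all four cases (Parts~I and~II, strict and non-strict) at once, so no rank-one construction, no choice of $\gamma'$, and no $\epsilon$-regularisation is needed. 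Your second-order rank-one device with the $\epsilon$-fattening in Part~II works, but the extra structure buys nothing here; the identity-matrix perturbation is already in $\curly{C}_s$, which is precisely what you went to some trouble to engineer in Part~II.
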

\begin{proof}
  Sufficiency of the two statements follows on noting that $\Meig(M(0))\Meig(\Delta(0))<1$ implies
  $\Meig(\Delta(0)M(0))<1$. Necessity can be proved via a contra-positive argument on choosing
  $\Delta(s)=\frac{1/\Meig(M(0))}{(s+1)}I$ as the destabilizing $\Delta(s)$.
\end{proof}

\section{\uppercase{Illustrative Example}}
\label{sec:example}

Consider the lightly damped mechanical plant depicted in Figure~\ref{fig:physical_system},
\begin{figure}[!htb]
\centering
\scalebox{.83}{\setlength{\unitlength}{0.00074366in}
\begingroup\makeatletter\ifx\SetFigFont\undefined%
\gdef\SetFigFont#1#2#3#4#5{%
  \reset@font\fontsize{#1}{#2pt}%
  \fontfamily{#3}\fontseries{#4}\fontshape{#5}%
  \selectfont}%
\fi\endgroup%
{\renewcommand{\dashlinestretch}{30}
\begin{picture}(5334,1668)(0,-10)
\path(2037,642)(2667,642)(2667,552)
	(2307,552)(2307,597)
\path(2307,687)(2307,732)(3027,732)
	(3027,552)(2667,552)(2667,732)
\path(3027,642)(3297,642)
\path(147,642)(777,642)(777,552)
	(417,552)(417,597)
\path(417,687)(417,732)(1137,732)
	(1137,552)(777,552)(777,732)
\path(1137,642)(1407,642)
\path(3927,642)(4557,642)(4557,552)
	(4197,552)(4197,597)
\path(4197,687)(4197,732)(4917,732)
	(4917,552)(4557,552)(4557,732)
\path(4917,642)(5187,642)
\path(147,1362)(147,102)
\path(147,1362)(12,1272)
\path(147,1272)(12,1182)
\path(147,1182)(12,1092)
\path(147,1092)(12,1002)
\path(147,1002)(12,912)
\path(147,912)(12,822)
\path(147,822)(12,732)
\path(147,732)(12,642)
\path(147,642)(12,552)
\path(147,552)(12,462)
\path(147,462)(12,372)
\path(147,372)(12,282)
\path(147,282)(12,192)
\path(147,192)(12,102)
\path(147,102)(12,12)
\path(5187,1362)(5187,102)
\path(5187,1362)(5322,1272)
\path(5187,1272)(5322,1182)
\path(5187,1182)(5322,1092)
\path(5187,1092)(5322,1002)
\path(5187,1002)(5322,912)
\path(5187,912)(5322,822)
\path(5187,822)(5322,732)
\path(5187,732)(5322,642)
\path(5187,642)(5322,552)
\path(5187,552)(5322,462)
\path(5187,462)(5322,372)
\path(5187,372)(5322,282)
\path(5187,282)(5322,192)
\path(5187,192)(5322,102)
\path(5187,102)(5322,12)
\path(957,1452)(957,1272)
\path(957,1362)(1407,1362)
\blacken\path(1287.000,1332.000)(1407.000,1362.000)(1287.000,1392.000)(1287.000,1332.000)
\path(2847,1452)(2847,1272)
\path(2847,1362)(3297,1362)
\blacken\path(3177.000,1332.000)(3297.000,1362.000)(3177.000,1392.000)(3177.000,1332.000)
\path(147,912)(372,912)(417,1002)
	(507,822)(597,1002)(687,822)
	(777,1002)(867,822)(957,1002)
	(1047,822)(1137,1002)(1182,912)(1407,912)
\path(2037,1092)(1407,1092)(1407,462)
	(2037,462)(2037,1092)
\path(2037,912)(2262,912)(2307,1002)
	(2397,822)(2487,1002)(2577,822)
	(2667,1002)(2757,822)(2847,1002)
	(2937,822)(3027,1002)(3072,912)(3297,912)
\path(3927,1092)(3297,1092)(3297,462)
	(3927,462)(3927,1092)
\path(3927,912)(4152,912)(4197,1002)
	(4287,822)(4377,1002)(4467,822)
	(4557,1002)(4647,822)(4737,1002)
	(4827,822)(4917,1002)(4962,912)(5187,912)
\path(1497,327)(2262,327)
\blacken\path(2142.000,297.000)(2262.000,327.000)(2142.000,357.000)(2142.000,297.000)
\path(3387,327)(4152,327)
\blacken\path(4032.000,297.000)(4152.000,327.000)(4032.000,357.000)(4032.000,297.000)
\put(550,1092){\makebox(0,0)[lb]{{\SetFigFont{10}{12.0}{\rmdefault}{\mddefault}{\updefault}1 N/m}}}
\put(550,327){\makebox(0,0)[lb]{{\SetFigFont{10}{12.0}{\rmdefault}{\mddefault}{\updefault}1 Ns/m}}}
\put(1137,1497){\makebox(0,0)[lb]{{\SetFigFont{10}{12.0}{\rmdefault}{\mddefault}{\updefault}$y_1$}}}
\put(1542,687){\makebox(0,0)[lb]{{\SetFigFont{10}{12.0}{\rmdefault}{\mddefault}{\updefault}1 Kg}}}
\put(1992,102){\makebox(0,0)[lb]{{\SetFigFont{10}{12.0}{\rmdefault}{\mddefault}{\updefault}$u_1$}}}
\put(2440,327){\makebox(0,0)[lb]{{\SetFigFont{10}{12.0}{\rmdefault}{\mddefault}{\updefault}$\alpha$ Ns/m}}}
\put(2440,1092){\makebox(0,0)[lb]{{\SetFigFont{10}{12.0}{\rmdefault}{\mddefault}{\updefault}$k$ N/m}}}
\put(3027,1497){\makebox(0,0)[lb]{{\SetFigFont{10}{12.0}{\rmdefault}{\mddefault}{\updefault}$y_2$}}}
\put(3432,687){\makebox(0,0)[lb]{{\SetFigFont{10}{12.0}{\rmdefault}{\mddefault}{\updefault}1 Kg}}}
\put(3882,102){\makebox(0,0)[lb]{{\SetFigFont{10}{12.0}{\rmdefault}{\mddefault}{\updefault}$u_2$}}}
\put(4330,327){\makebox(0,0)[lb]{{\SetFigFont{10}{12.0}{\rmdefault}{\mddefault}{\updefault}1 Ns/m}}}
\put(4330,1092){\makebox(0,0)[lb]{{\SetFigFont{10}{12.0}{\rmdefault}{\mddefault}{\updefault}1 N/m}}}
\end{picture}
}}
\caption{\small\label{fig:physical_system} Lightly damped uncertain mechanical plant}
\end{figure}
which consists of two unit masses constrained to slide rectilinearly on a frictionless
table. Each mass is attached to a fixed wall via a spring of known unit stiffness and via a
damper of known unit viscous resistance. Furthermore, the two unit masses are coupled together via a spring
of uncertain stiffness $k$~\texttt{N/m} and via a damper of uncertain viscous resistance
$\alpha$~\texttt{Ns/m}. A force is applied to each mass (denoted by $u_1$ and $u_2$ respectively)
and the displacement of each mass is measured (denoted by $y_1$ and $y_2$ respectively). 

Although this is not an extremely difficult design problem, it does illustrate a number of important
points arising from the  results in this paper, as it contains key features such as an uncertain MIMO system with
uncertainty that has negative imaginary frequency response. Similar examples have been considered
in the literature as benchmark problems by a number of authors,
including~\cite{Wie-Bernstein:92,Vinnicombe:01} to mention a few.

For shorthand, let us define some commonly appearing transfer functions and matrices. Let 
\begin{gather*} 
p(s):=\frac{1}{s^2+s+1},\quad \delta(s):=\frac{1}{s^2+(2\alpha+1)s+(2k+1)} \\
   \text{and }\; \Psi:=\matrixB{1 & 0\\ 1 & 1}. 
\end{gather*}
Then, elementary mechanical modeling reveals that the transfer function matrix for the plant
depicted in Figure~\ref{fig:physical_system} from force input vector
$u:=\matrixB{u_1 \\ u_2}$ to displacement output measurements $y:=\matrixB{y_1\\y_2}$ is given by
$y(s)=P_\Delta(s) u(s)$ where
{\footnotesize
\[ P_\Delta := p(s)\delta(s) \matrixB{(s^2+(\alpha+1)s+(k+1)) \!&\! (\alpha s +k) \\ 
                       (\alpha s +k) \!&\! (s^2+(\alpha+1)s+(k+1))}. \]}
It is clear that $P_\Delta(s)$ is uncertain because $k$ and $\alpha$ are unknown. 

For the purpose of control system design, we now choose to split the uncertain plant $P_\Delta(s)$ as
$P_\Delta(s) = P(s) + \Delta(s)$ where $P(s)$ is the nominal completely known plant model and
$\Delta(s)$ is the uncertain remainder. Via partial fraction expansion, we see that $P(s) =
\Psi\diag(\onehalf p(s),0) \Psi^*$ and $\Delta(s) = \Psi^{-1}\diag(\onehalf \delta(s),0)\Psi^{-*}$.
It is then a simple computation to check that $\Delta(s)\in\curly{C}$ for all $\alpha>0$ and $k>0$.

Now let us consider the controlled closed-loop system given in Figure~\ref{fig:closed-loop}, and let $C(s)$ be
chosen as $C(s):=\Psi^{-*}\diag(\frac{-2(s^2+s+1)}{2s^3+4s^2+4s+3},\frac{-1}{s+1})\Psi^{-1}$. Then,
define $M(s):= -C(s)(I+P(s)C(s))^{-1}$ to be the transfer function matrix mapping $w$ to $z$ so that
the closed-loop system in Figure~\ref{fig:closed-loop} can be rearranged into
Figure~\ref{fig:M_and_Delta} for robust stability analysis.
\begin{figure}[!htb]
   \centering
   \scalebox{.73}{\setlength{\unitlength}{0.00083333in}
\begingroup\makeatletter\ifx\SetFigFont\undefined%
\gdef\SetFigFont#1#2#3#4#5{%
  \reset@font\fontsize{#1}{#2pt}%
  \fontfamily{#3}\fontseries{#4}\fontshape{#5}%
  \selectfont}%
\fi\endgroup%
{\renewcommand{\dashlinestretch}{30}
\begin{picture}(5354,2808)(0,-10)
\put(687,1137){\ellipse{150}{150}}
\put(3762,1137){\ellipse{150}{150}}
\path(2787,2412)(3387,2412)(3387,1962)
	(2787,1962)(2787,2412)
\path(2787,1362)(3387,1362)(3387,912)
	(2787,912)(2787,1362)
\path(1287,1362)(1887,1362)(1887,912)
	(1287,912)(1287,1362)
\path(1887,1137)(2787,1137)
\blacken\path(2667.000,1107.000)(2787.000,1137.000)(2667.000,1167.000)(2667.000,1107.000)
\path(2487,1137)(2487,2187)(2787,2187)
\blacken\path(2667.000,2157.000)(2787.000,2187.000)(2667.000,2217.000)(2667.000,2157.000)
\path(3387,2187)(3762,2187)(3762,1212)
\blacken\path(3732.000,1332.000)(3762.000,1212.000)(3792.000,1332.000)(3732.000,1332.000)
\path(3387,1137)(3687,1137)
\blacken\path(3567.000,1107.000)(3687.000,1137.000)(3567.000,1167.000)(3567.000,1107.000)
\path(762,1137)(1287,1137)
\blacken\path(1167.000,1107.000)(1287.000,1137.000)(1167.000,1167.000)(1167.000,1107.000)
\path(12,1137)(612,1137)
\blacken\path(492.000,1107.000)(612.000,1137.000)(492.000,1167.000)(492.000,1107.000)
\path(4212,1137)(4212,12)(687,12)(687,1062)
\blacken\path(717.000,942.000)(687.000,1062.000)(657.000,942.000)(717.000,942.000)
\path(537,837)(612,837)
\path(3837,1137)(4587,1137)
\blacken\path(4467.000,1107.000)(4587.000,1137.000)(4467.000,1167.000)(4467.000,1107.000)
\dashline{60.000}(2337,2712)(3987,2712)(3987,612)
	(2337,612)(2337,2712)
\put(2900,1000){\makebox(0,0)[lb]{{\SetFigFont{12}{14.4}{\rmdefault}{\mddefault}{\updefault}$P(s)$}}}
\put(2900,2050){\makebox(0,0)[lb]{{\SetFigFont{12}{14.4}{\rmdefault}{\mddefault}{\updefault}$\Delta(s)$}}}
\put(1400,1000){\makebox(0,0)[lb]{{\SetFigFont{12}{14.4}{\rmdefault}{\mddefault}{\updefault}$C(s)$}}}
\put(4062,2500){\makebox(0,0)[lb]{{\SetFigFont{12}{14.4}{\rmdefault}{\mddefault}{\updefault}$P_{\Delta}(s)$}}}
\put(12,1212){\makebox(0,0)[lb]{{\SetFigFont{12}{14.4}{\rmdefault}{\mddefault}{\updefault}$r$}}}
\put(2112,1212){\makebox(0,0)[lb]{{\SetFigFont{12}{14.4}{\rmdefault}{\mddefault}{\updefault}$u$}}}
\put(2562,1662){\makebox(0,0)[lb]{{\SetFigFont{12}{14.4}{\rmdefault}{\mddefault}{\updefault}$z$}}}
\put(4212,1212){\makebox(0,0)[lb]{{\SetFigFont{12}{14.4}{\rmdefault}{\mddefault}{\updefault}$y$}}}
\put(3570,1662){\makebox(0,0)[lb]{{\SetFigFont{12}{14.4}{\rmdefault}{\mddefault}{\updefault}$w$}}}
\end{picture}
}}
   \caption{\small\label{fig:closed-loop} Controlled closed-loop system}
   \vsp{1}
   \scalebox{.85}{\setlength{\unitlength}{0.00083333in}
\begingroup\makeatletter\ifx\SetFigFont\undefined%
\gdef\SetFigFont#1#2#3#4#5{%
  \reset@font\fontsize{#1}{#2pt}%
  \fontfamily{#3}\fontseries{#4}\fontshape{#5}%
  \selectfont}%
\fi\endgroup%
{\renewcommand{\dashlinestretch}{30}
\begin{picture}(2423,1539)(0,-10)
\path(825,1512)(1425,1512)(1425,1062)
	(825,1062)(825,1512)
\path(825,462)(1425,462)(1425,12)
	(825,12)(825,462)
\path(1425,1287)(2025,1287)(2025,237)(1425,237)
\blacken\path(1545.000,267.000)(1425.000,237.000)(1545.000,207.000)(1545.000,267.000)
\path(825,237)(225,237)(225,1287)(825,1287)
\blacken\path(705.000,1257.000)(825.000,1287.000)(705.000,1317.000)(705.000,1257.000)
\put(925,130){\makebox(0,0)[lb]{{\SetFigFont{12}{14.4}{\rmdefault}{\mddefault}{\updefault}$M(s)$}}}
\put(925,1180){\makebox(0,0)[lb]{{\SetFigFont{12}{14.4}{\rmdefault}{\mddefault}{\updefault}$\Delta(s)$}}}
\put(2100,687){\makebox(0,0)[lb]{{\SetFigFont{12}{14.4}{\rmdefault}{\mddefault}{\updefault}$w$}}}
\put(0,687){\makebox(0,0)[lb]{{\SetFigFont{12}{14.4}{\rmdefault}{\mddefault}{\updefault}$z$}}}
\end{picture}
}}
   \caption{\small\label{fig:M_and_Delta} Rearranged closed-loop}
\end{figure}

Since $P(s)\in\RHinf^{2\times 2}$, internal stability of the nominal feedback loop (\ie~pretending
$\Delta(s)=0$) is equivalent to $M(s)\in\RHinf^{2\times 2}$. Furthermore, since
$\Delta(s)\in\curly{C}$, we additionally require $M(s)\in\curly{C}_s$ to be able to apply the proposed results
Theorem~\ref{thm:main_result} and Corollary~\ref{thm:main_corollary} to conclude robust stability.

For our particular choice of $C(s)$, it is easy to see that $M(s)=\frac{1}{s+1}\Psi^{-*}\Psi^{-1}$, which
clearly belongs to $\curly{C}_s$ and furthermore satisfies $M(\infty)=0$.
Since $M(0)=\Psi^{-*}\Psi^{-1}$ (which incidentally is strictly greater than $M(\infty)$ as stated
in Lemma~\ref{thm:diff_between_freq_zero_and_infty}) and
$\Delta(0)=\Psi^{-1}\diag(\frac{1}{2(2k+1)},0)\Psi^{-*}$ (which incidentally is greater than or
equal to $\Delta(\infty)$ as stated in Lemma~\ref{thm:diff_between_freq_zero_and_infty}), it follows
that $\Meig(\Delta(0)M(0)) = \frac{5}{2(2k+1)}$, $\Meig(M(0))=\frac{3+\sqrt{5}}{2}$ and
$\Meig(\Delta(0))=\frac{1}{2k+1}$.

Consequently, Corollary~\ref{thm:main_corollary} Part I states that the feedback interconnection given in
Figure~\ref{fig:closed-loop} is robustly stable for all uncertainties $\Delta\in\curly{C}$ (not just
those of the form $\Delta(s) = \Psi^{-1}\diag(\onehalf \delta(s),0)\Psi^{-*}$) satisfying
$\Meig(\Delta(0))<\frac{2}{3+\sqrt{5}}\quad(=1/\Meig(M(0)))$. Additionally,
Theorem~\ref{thm:main_result} states that for any given $\alpha>0$, the physical system of
Figure~\ref{fig:physical_system} is robustly stabilized by the controller $C(s)$ defined above if and only
if $k>0.75$, obtained through the condition $\Meig(\Delta(0)M(0))<1$. 
The former statement is powerful because it characterizes a huge class of systems (including spillover
dynamics) for which the closed-loop system is robustly stable. The latter statement is powerful
because for a given uncertainty class, it tells us exactly in a necessary and sufficient manner
the parameter boundary of robust stability.

\section{\uppercase{Conclusions}}
\label{sec:conclusions}

SISO LTI systems with negative imaginary frequency response have been studied in the context of
positive position feedback control of lightly damped structures and the analysis/synthesis
methods there depended on graphically Nyquist plots. This paper generalises the key
stability result to MIMO LTI systems with negative imaginary frequency response showing that even in
this case, a necessary and sufficient condition for the internal stability of such systems is that
the DC loop gain (measured in a particular precise sense) is less than unity. We also gave in this
paper a complete state-space characterisation of MIMO LTI systems with negative imaginary frequency
response. This could possibly be used in future work to assist with synthesising systems with
negative imaginary frequency response.

The next  steps to  extend  applicability of this research are: (a) devise a
controller synthesis procedure that generates systems that belong to either class $\curly{C}$ or
$\curly{C}_s$; and (b) generalise the analysis result given in this article to allow one (or
possibly both) systems to be nonlinear and/or time-varying. Focusing on the latter, we point out
that \cite{Angeli:06} has derived a theory for SISO nonlinear systems with counter-clockwise
input-output dynamics that is closely related to this work. It is consequently interesting to see
whether the ideas in \cite{Angeli:06} generalise to MIMO systems or not, or whether use of
dissipativity theory may lead to the required MIMO nonlinear generalisations.


\end{document}